%------------------------------------------------------------------------------
% Beginning of journal.tex
%------------------------------------------------------------------------------
%
% AMS-LaTeX version 2 sample file for journals, based on amsart.cls.
%
%        ***     DO NOT USE THIS FILE AS A STARTER.      ***
%        ***  USE THE JOURNAL-SPECIFIC *.TEMPLATE FILE.  ***
%
% Replace amsart by the documentclass for the target journal, e.g., tran-l.
%
\documentclass{amsart}

\usepackage{amssymb,amsmath,amsfonts,stmaryrd,mathrsfs,amscd}
\usepackage[sans]{dsfont}
\usepackage{fouridx}
\usepackage[dvips,cmtip,all]{xy}
\usepackage{setspace,paralist,tikz}
\usetikzlibrary{matrix,arrows,decorations.pathmorphing}
\usepackage[linkcolor=black]{hyperref}
\usepackage{lineno,color}

\newtheorem{theorem}{Theorem}[section]
\newtheorem{lemma}[theorem]{Lemma}

\theoremstyle{definition}
\newtheorem{definition}[theorem]{Definition}

\newtheorem{corollary}[theorem]{Corollary}

\theoremstyle{remark}
\newtheorem{remark}[theorem]{Remark}

\numberwithin{equation}{section}

\def\Spec{\operatorname{Spec}}
\def\proj{\operatorname{Proj}}

\def\heit{\operatorname{ht}}

%    Absolute value notation

%    Blank box placeholder for figures (to avoid requiring any
%    particular graphics capabilities for printing this document).

\begin{document}

\title{Lifting finite self maps of equicharacteristic complete local rings}

%    Information for first author
\author{Mahdi Majidi-Zolbanin}
\address{Department of Mathematics, LaGuardia Community College of the City University of New York, Long Island City, NY 11101}
\email{mmajidi-zolbanin@lagcc.cuny.edu}
%\thanks{Support information for the second author.}

%    Information for second author
\author{Nikita Miasnikov}
\address{Department of Mathematics,The Graduate Center of the City University of New York, New York, NY 10016}
\email{n5k5t5@gmail.com}
%\thanks{Support information for the second author.}

%    Information for third author
\author{Lucien Szpiro}
%    Address of record for the research reported here
\address{Department of Mathematics,The Graduate Center of the City University of New York, New York, NY 10016}
%    Current address
%\curraddr{}
\email{lszpiro@gc.cuny.edu}
%    \thanks will become a 1st page footnote.
\thanks{The second and third authors received funding from the NSF Grants DMS-0854746 and DMS-0739346.}

%    General info
%\subjclass[2000]{Primary 54C40, 14E20; Secondary 46E25, 20C20}

\date{May 9th, 2011}

\dedicatory{}

\keywords{Complete local rings, Lifting finite self maps, Strong system of parameters}

\begin{abstract}
In the spirit of Fakhruddin~\cite[Corollary~2.2]{NF} and Szpiro-Bhatnagar \cite[Theorem~1]{AnupSzpiro}, we show that for an equicharacteristic complete local ring  \(A\), with a given embedding of Spec(\(A\)) in the prime spectrum Spec(\(R\)) of some complete regular local ring \(R\), any finite self map of \(A\) can be lifted to a finite self map of \(R\), keeping the given embedding.
\end{abstract}

\maketitle

\section{Introduction and preleminaries}\label{Introduction}
This note is inspired by a result of N.~Fakhruddin, on lifting polarized self maps of projective varieties to \emph{some} ambient projective space. In~\cite[Corollary~2.2]{NF} Fakhruddin showed that given a morphism (self map) \(\varphi:X\longrightarrow X\) of a projective variety \(X\) over an \emph{infinite} field \(K\) and an ample line bundle \(\mathcal{L}\) on \(X\) such that \(\varphi^*(\mathcal{L})\cong\mathcal{L}^{\otimes d}\) for some \(d\geq1\), there exists an embedding \(\imath\) of \(X\) in some \(\mathbb{P}^N_K\), given by an appropriate tensor power \(\mathcal{L}^{\otimes n}\) of \(\mathcal{L}\), and a morphism \(\psi:\mathbb{P}^N_K\longrightarrow\mathbb{P}^N_K\) such that \(\psi\circ\imath=\imath\circ\varphi\). In~\cite[Theorem~1]{AnupSzpiro} L.~Szpiro and A.~Bhatnagar relaxed some of Fakhruddin's hypotheses and showed that one can keep the same embedding of \(X\) given by \(\mathcal{L}\), and instead lift an appropriate power \(\varphi^r\) of the self map to the ambient projective space.\par In this paper we will consider the analogous lifting problem for \emph{finite} self maps of equicharacteristic complete Noetherian local rings. In Theorem~\ref{lift} we show that for an equicharacteristic complete local ring  \(A\), with a given embedding of Spec(\(A\)) in the prime spectrum Spec(\(R\)) of some complete regular local ring \(R\), any finite self map of \(A\) can be lifted to a finite self map of \(R\), keeping the given embedding. Then in Corollary~\ref{shift} we conclude that for a given equicharacteristic complete Noetherian local ring \(A\), there exists an embedding of \(\Spec(A)\) in the prime spectrum \(\Spec(R)\) of some power series ring \(R\) over a field, such that any finite self map of \(A\) can be lifted to a finite self map of \(R\). As another improvement, we do not assume our fields to be infinite. \par
In the remaining of this section we will present some preparatory facts that will be used in the proof of Theorem~\ref{lift}. 
\begin{definition}[{\cite[p.~159]{Hans}}]
In a Noetherian local ring \((R,\mathfrak{m})\) of dimension \(d\) and of embedding dimension \(e\), by a \emph{strong system of parameters} we mean a system of parameters \(\{x_1,\ldots,x_d\}\) that is part of a minimal set of generators  \(\{x_1,\ldots,x_d,\ldots,x_e\}\) of the maximal ideal \(\mathfrak{m}\).
\end{definition}
\begin{lemma}\label{SOP}
Every Noetherian local ring \((R,\mathfrak{m})\) has strong systems of parameters.
\end{lemma}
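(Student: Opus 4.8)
The plan is to construct the system of parameters one element at a time, imposing at each stage the extra requirement that the new element remain linearly independent, modulo \(\mathfrak{m}^2\), from those already chosen. Write \(d=\dim R\) and \(e=\dim_k \mathfrak{m}/\mathfrak{m}^2\) for the embedding dimension, where \(k=R/\mathfrak{m}\), and recall that \(d\le e\). Once we have produced \(x_1,\dots,x_d\in\mathfrak{m}\) forming a system of parameters whose images \(\bar x_1,\dots,\bar x_d\) are linearly independent in the \(k\)-vector space \(\mathfrak{m}/\mathfrak{m}^2\), we are done: a linearly independent family in \(\mathfrak{m}/\mathfrak{m}^2\) extends to a basis \(\bar x_1,\dots,\bar x_d,\dots,\bar x_e\), and by Nakayama's lemma any lift of such a basis is a minimal generating set of \(\mathfrak{m}\). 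Thus the whole difficulty is to carry out the parameter construction while preserving independence modulo \(\mathfrak{m}^2\).

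The induction runs as follows. Suppose \(0\le i<d\) and that \(x_1,\dots,x_i\) have been chosen so that \(\bar x_1,\dots,\bar x_i\) are linearly independent and \(\dim R/(x_1,\dots,x_i)=d-i\) (the case \(i=0\) being trivial). Let \(\mathfrak{p}_1,\dots,\mathfrak{p}_s\) be those minimal primes of \((x_1,\dots,x_i)\) with \(\dim R/\mathfrak{p}_j=d-i\). To force the dimension to drop by one at the next step we must pick \(x_{i+1}\) outside every \(\mathfrak{p}_j\); avoiding these top-dimensional primes gives \(\dim R/(x_1,\dots,x_{i+1})\le d-i-1\), while Krull's principal ideal theorem gives the reverse inequality, so the dimension drops by exactly one. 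To preserve independence I set \(U:=\mathfrak{m}^2+(x_1,\dots,x_i)\), an ideal of \(R\) whose image in \(\mathfrak{m}/\mathfrak{m}^2\) is exactly the \(k\)-span of \(\bar x_1,\dots,\bar x_i\); then \(\bar x_{i+1}\) is independent from the earlier images precisely when \(x_{i+1}\notin U\). So it suffices to find \(x_{i+1}\in\mathfrak{m}\) lying in none of \(\mathfrak{p}_1,\dots,\mathfrak{p}_s,U\).

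The existence of such an element is where prime avoidance enters, and it is the step that must be handled with care, since we do not wish to assume \(k\) infinite. Each \(\mathfrak{p}_j\) satisfies \(\dim R/\mathfrak{p}_j=d-i>0\), hence \(\mathfrak{p}_j\subsetneq\mathfrak{m}\) and \(\mathfrak{m}\not\subseteq\mathfrak{p}_j\); and since \(i<d\le e\) the span of \(\bar x_1,\dots,\bar x_i\) is a proper subspace of \(\mathfrak{m}/\mathfrak{m}^2\), so \(U\subsetneq\mathfrak{m}\) and \(\mathfrak{m}\not\subseteq U\). I would now invoke the refined prime avoidance lemma: if an ideal is contained in a finite union of ideals at most two of which fail to be prime, then it is contained in one of them. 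Here only the single ideal \(U\) is possibly non-prime, so the hypothesis is met over an arbitrary ring, with no infiniteness assumption on \(k\). Since \(\mathfrak{m}\) is contained in none of \(\mathfrak{p}_1,\dots,\mathfrak{p}_s,U\) individually, it is not contained in their union, and any \(x_{i+1}\in\mathfrak{m}\setminus(\mathfrak{p}_1\cup\dots\cup\mathfrak{p}_s\cup U)\) completes the inductive step.

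Iterating \(d\) times yields \(x_1,\dots,x_d\) with \(\dim R/(x_1,\dots,x_d)=0\), a system of parameters whose images are linearly independent in \(\mathfrak{m}/\mathfrak{m}^2\); by the first paragraph this is a strong system of parameters. The main obstacle is precisely the prime-avoidance step: the naive vector-space form of prime avoidance would require \(k\) infinite, and the crucial point is that allowing one non-prime ideal in the union lets the argument go through unconditionally.
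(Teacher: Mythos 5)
Your proof is correct and takes essentially the same route as the paper: the same induction in which \(x_{i+1}\) is chosen, by prime avoidance, outside the top-dimensional minimal primes of \(\left<x_1,\ldots,x_i\right>\) and outside the ideal \(\mathfrak{m}^2+\left<x_1,\ldots,x_i\right>\), followed by extension to a basis of \(\mathfrak{m}/\mathfrak{m}^2\) and Nakayama. Your explicit appeal to the refined avoidance lemma (allowing one non-prime ideal in the union, hence no infiniteness hypothesis on the residue field) is precisely the form of the Avoidance Lemma the paper cites and uses.
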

\begin{proof}
 Let \(k=R/\mathfrak{m}\), let \(e\) be the embedding dimension of \(R\), and let \(d=\dim R\). If \(d=0\) then the statement holds trivially, since every system of parameters is empty. Assume now, that \(d>0\). We will use the Prime Avoidance Lemma~\cite[{p.~2, (1.B)}]{Matsumura1} to construct a strong system of parameters, inductively. It suffices to construct a sequence of elements \(x_1,\ldots,x_d\in\mathfrak{m}\) such that
\begin{enumerate}
\item[(a)] \(\dim R/\left<x_1,\ldots,x_i\right>=d-i\), for \(1\leq i\leq d\), and
\item[(b)] the images of \(x_1,\ldots,x_d\) in \(\mathfrak{m}/\mathfrak{m}^2\) are linearly independent over \(k\).
\end{enumerate}
To choose \(x_1\), let \(\{\mathfrak{p}_{1i}\}_{1\leq i\leq t}\) be the set of minimal prime ideals of \(R\) with the property \(\dim R/\mathfrak{p}_{1i}=d\). By the Avoidance Lemma there is an element 
\[x_1\in\mathfrak{m}\setminus\left(\mathfrak{m}^2\cup\mathfrak{p}_{11}\cup\ldots\cup\mathfrak{p}_{1t}\right).\]
Then \(\dim R/\left<x_1\right>=d-1\) and the image of \(x_1\) in \(\mathfrak{m}/\mathfrak{m}^2\) is linearly independent over \(k\). Now let \(r-1<d\) and suppose we have constructed a sequence of elements \(x_1,\ldots,x_{r-1}\) in \(\mathfrak{m}\) with desired properties (a) and (b). To choose the next element \(x_{r}\), let \(\{\mathfrak{p}_{ri}\}_{1\leq i\leq s}\) be the set of all minimal associated prime ideals of \(R/\left<x_1,\ldots,x_{r-1}\right>\) that satisfy \(\dim R/\mathfrak{p}_{ri}=d-r+1\). Since \(r-1<d\leq e\), we cannot have \(\mathfrak{m}=\mathfrak{m}^2+\left<x_1,\ldots,x_{r-1}\right>\). Hence, by the Avoidance Lemma there is an element 
\[x_r\in\mathfrak{m}\setminus\left(\mathfrak{m}^2+\left<x_1,\ldots,x_{r-1}\right>\cup\mathfrak{p}_{r1}\cup\ldots\cup\mathfrak{p}_{rs}\right).\]
Then \(\dim R/\left<x_1,\ldots,x_r\right>=d-r\). To complete the proof we need to show that the images \(\overline{x}_1,\ldots,\overline{x}_r\) of \(x_1,\ldots,x_r\) in \(\mathfrak{m}/\mathfrak{m}^2\) are linearly independent over \(k\). If not, then since by the induction hypothesis \(\overline{x}_1,\ldots,\overline{x}_{r-1}\) are linearly independent over \(k\), we must have a dependence relation of the form \(\alpha_1\overline{x}_1+\ldots+\alpha_{r-1}\overline{x}_{r-1}-x_r=0\) in \(\mathfrak{m}/\mathfrak{m}^2\), with \(\alpha_i\in k\). This means if for \(1\leq i\leq r-1\) we choose elements \(a_i\in R\) such that they map to \(\alpha_i\) in \(R/\mathfrak{m}\), then \(a_1x_1+\ldots+a_{r-1}x_{r-1}-x_r\in\mathfrak{m}^2\), or \(x_r\in\mathfrak{m}^2+\left<x_1,\ldots,x_{r-1}\right>\), which contradicts the choice of \(x_r\). Thus, the images of \(x_1,\ldots,x_r\) in \(\mathfrak{m}/\mathfrak{m}^2\) must be linearly independent over \(k\).
\end{proof}
\begin{lemma}\label{lifting fields}
Let \((R,\mathfrak{m})\) be an equicharacteristic complete local ring and suppose \(A=R/\mathfrak{a}\) is a quotient ring with canonical surjection \(\pi:R\longrightarrow A\). If \(K\) is a subfield of \(A\), then there is a subfield \(L\) of \(R\) such that \(\pi|_L:L\longrightarrow K\) is an isomorphism.
\end{lemma}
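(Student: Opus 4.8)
The plan is to split $\pi$ over $K$: I will produce a ring homomorphism $s\colon K\to R$ with $\pi\circ s$ equal to the inclusion $K\hookrightarrow A$. Granting this, $L:=s(K)$ is a subfield of $R$; since $K$ is a field and $s(1)=1$, the map $s$ is injective, and $\pi|_L\colon L\to K$ is inverse to $s$ viewed as a map onto $L$, hence an isomorphism. So the whole problem reduces to constructing the section $s$.

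The construction will be by successive approximation in the $\mathfrak a$-adic topology. First note that, since $\mathfrak a\subseteq\mathfrak m$ and $R$ is Noetherian and $\mathfrak m$-adically complete, $R$ is complete and separated $\mathfrak a$-adically as well: separatedness is Krull's intersection theorem, while completeness follows because each $\mathfrak a^n$ is closed in the $\mathfrak m$-adic topology, so the $\mathfrak m$-adic limit of an $\mathfrak a$-adically Cauchy sequence is congruent to its terms modulo every $\mathfrak a^n$. Hence $R=\varprojlim_n R/\mathfrak a^n$, and to give $s$ it suffices to give a compatible system of ring maps $s_n\colon K\to R/\mathfrak a^n$ with $s_1$ the given inclusion $K\hookrightarrow A=R/\mathfrak a$. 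The transition maps $R/\mathfrak a^{n+1}\to R/\mathfrak a^n$ are surjective with square-zero kernel $\mathfrak a^n/\mathfrak a^{n+1}$, so the task becomes: lift $s_n$ across a square-zero extension.

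I expect the square-zero lifting to be the crux. If the elements of $K$ were transcendental or separably algebraic over the prime field this would be routine (preimages of units are units, and separable elements lift by Hensel's lemma since $R$ is Henselian); the genuine obstacle is the purely inseparable part in characteristic $p$, where lifting an $x$ with $x^p=c$ would require a $p$th root in $R$ of the lift of $c$, which in general does not exist. I would remove this obstacle uniformly through formal smoothness: the prime field is perfect, so $K$ is a separable, hence formally smooth, extension of it, and formal smoothness is exactly the statement that any homomorphism from $K$ to $R/\mathfrak a^n$ lifts along the square-zero surjection $R/\mathfrak a^{n+1}\to R/\mathfrak a^n$. Choosing such lifts one after another yields the compatible family $(s_n)$, and $s=\varprojlim_n s_n$ is the desired section. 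Without formal smoothness the same effect is obtained by lifting a $p$-basis of $K$ and using $\mathfrak a$-adic completeness to build compatible systems of $p$-power roots, which is the form Cohen's coefficient-field construction takes in this setting.
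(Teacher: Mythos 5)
Your proof is correct, but it takes a genuinely different route from the paper's. The paper does not split $\pi$ from above: it pulls $K$ back to the subring $B=\pi^{-1}(K)\subseteq R$, shows that $B$ is an equicharacteristic local ring with maximal ideal $\mathfrak{a}$ which is \emph{closed} in $R$ (any subfield of $A$ is discrete, hence closed, in the $\mathfrak{m}_A$-adic topology, and $\pi$ is continuous), deduces that the $\mathfrak{a}$-adic completion $\widehat{B}$ maps onto $B$, and then invokes the existence of coefficient fields in complete local rings that need not be Noetherian (citing Nagata, Matsumura, and Geddes); the image in $R$ of a coefficient field of $\widehat{B}$ is the desired $L$. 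You instead identify $R$ with $\varprojlim_n R/\mathfrak{a}^n$ and manufacture a ring-theoretic section $s\colon K\to R$ by successive square-zero lifting, the lifts being supplied by the Cohen--Grothendieck theorem that separable field extensions are formally smooth (\cite[Theorem~26.9]{Matsumura2}, or EGA $0_{\mathrm{IV}}$, 19.6.1), applied over the prime field. The black boxes invoked are of comparable depth---coefficient-field existence versus formal smoothness of separable extensions; indeed Grothendieck's proof of Cohen's theorem is exactly your argument run with $\mathfrak{a}=\mathfrak{m}$---so neither proof is more elementary, and the difference is in the packaging. Your version is more direct (no auxiliary ring $B$, no topological closedness argument) and it isolates where equicharacteristicity enters: it is what guarantees that $K$ and all the rings $R/\mathfrak{a}^n$ are algebras over one and the same prime field, so that formal smoothness applies to arbitrary ring maps; that point deserves an explicit sentence. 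What the paper's proof buys instead is freedom from any Noetherian hypothesis on $R$: your identification $R\cong\varprojlim_n R/\mathfrak{a}^n$ uses Krull's intersection theorem and the closedness of the ideals $\mathfrak{a}^n$, both of which require $R$ Noetherian, whereas the paper's argument works for an arbitrary equicharacteristic complete local ring (and it must handle non-Noetherian rings anyway, since $B$ is essentially never Noetherian). Since Lemma~\ref{lifting fields} is applied only to the complete regular---hence Noetherian---ring $R$ of Theorem~\ref{lift}, this restriction is harmless in context, but you should state explicitly that you are assuming $R$ Noetherian.
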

\begin{proof}
Let \(B=\pi^{-1}(K)\). Then \(B\) is a local subring of \(R\) with maximal ideal \(\mathfrak{n}=\pi^{-1}(0)\). Note that as subsets of \(R\), \(\mathfrak{n}=\mathfrak{a}\). Since \(B/\mathfrak{n}= K\), we see that \(B\) is equicharacteristic, as well. In general \(B\) need not be Noetherian. We note that \(B\) is a closed subset of \(R\) and hence, is complete with respect to the topology induced from the \(\mathfrak{m}\)-adic topology of \(R\). To see this, note that the topology induced from the \(\mathfrak{m}_A\)-adic topology of \(A\) on any subfield of \(A\) is the discrete topology. Therefore, any subfield of \(A\) is complete with respect to the topology induced from \(A\), and hence is closed in \(A\). Since \(\pi:R\longrightarrow A\) is a continuous map and \(B=\pi^{-1}(K)\), the result follows. Now let \(\widehat{B}\) be the \(\mathfrak{n}\)-adic completion of \(B\). Since \(B\) is a local subring of \(R\) and \(R\) is complete, we get a map \(\widehat{i}:\widehat{B}\longrightarrow R\), where \(i:B\hookrightarrow R\) is the inclusion homomorphism. Furthermore, since \(B\) is a closed subset of \(R\) and is complete with respect to the topology induced from the \(\mathfrak{m}\)-adic topology of \(R\), we see that \(\widehat{i}(\widehat{B})=B\). Let \(L^\prime\) be a coefficient field of \(\widehat{B}\). (For the existence of coefficient fields in complete local rings that are not necessarily Noetherian, see~\cite[Theorem~31.1, p.~106]{Nagata}, or~\cite[Theorem~28.3, p.~215]{Matsumura2} or~\cite[Corollary~2, p.~47]{Geddes}.) Let \(L:=\widehat{i}(L^\prime)\). Then \(L\) is subfield of \(B\) that is isomorphic to \(L^\prime\). Furthermore, the following diagram is commutative, and shows that \(\pi|_L:L\longrightarrow B/\mathfrak{n}=K\) is an isomorphism.
\[\begin{tikzpicture}[>=angle 90]
\matrix(m)[matrix of math nodes,
row sep=2.6em, column sep=2.8em,
text height=1.5ex, text depth=0.25ex]
{L^\prime&\widehat{B}&B&L\\
&\widehat{B}/\widehat{\mathfrak{n}}&B/\mathfrak{n}&\\};
\path[->>,font=\scriptsize](m-1-2) edge node[above] {$\widehat{i}$} (m-1-3);
\path[right hook->](m-1-1) edge (m-1-2);
\path[left hook->](m-1-4) edge (m-1-3);
\path[->,font=\scriptsize](m-1-1) edge node[above,sloped] {$\simeq$} (m-2-2);
\path[->](m-1-4) edge (m-2-3);
\path[->,font=\scriptsize](m-2-2) edge node[above] {$\simeq$} (m-2-3);
\path[->>,font=\scriptsize](m-1-2) edge (m-2-2);
\path[->>,font=\scriptsize](m-1-3) edge node[left] {$\pi|_B$} (m-2-3);
\path[->,font=\scriptsize](m-1-1) edge [bend left]  node[above] {$\simeq$} (m-1-4);
\end{tikzpicture}\]

\end{proof}
\begin{lemma}\label{nice}
Let \(\varphi:(A,\mathfrak{m})\longrightarrow(B,\mathfrak{n})\) be a local homomorphism of Noetherian local rings and suppose that \(\left<\varphi(\mathfrak{m})\right>\) is \(\mathfrak{n}\)-primary. If we have a prime ideal \(P\) of \(B\) such that \(\varphi^{-1}(P)=\mathfrak{m}\) then \(P=\mathfrak{n}\).
\end{lemma}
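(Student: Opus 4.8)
The plan is to reduce everything to the characterization of $\mathfrak{n}$-primary ideals by their radical, namely that $\left<\varphi(\mathfrak{m})\right>$ being $\mathfrak{n}$-primary means precisely that $\sqrt{\left<\varphi(\mathfrak{m})\right>}=\mathfrak{n}$. Once this is in hand, the statement should follow from two inclusions of $P$ and $\mathfrak{n}$ in each other, one coming from the hypothesis on the contraction $\varphi^{-1}(P)$ and the other from the fact that $B$ is local.

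First I would extract from the hypothesis $\varphi^{-1}(P)=\mathfrak{m}$ only the inclusion $\mathfrak{m}\subseteq\varphi^{-1}(P)$, which says $\varphi(x)\in P$ for every $x\in\mathfrak{m}$, i.e.\ $\varphi(\mathfrak{m})\subseteq P$. Since $P$ is an ideal of $B$ it absorbs the $B$-linear combinations, so the ideal it generates satisfies $\left<\varphi(\mathfrak{m})\right>\subseteq P$. Taking radicals and using that $P$ is prime (hence $\sqrt{P}=P$) together with the primary hypothesis gives $\mathfrak{n}=\sqrt{\left<\varphi(\mathfrak{m})\right>}\subseteq\sqrt{P}=P$. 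For the reverse inclusion I would invoke that $B$ is local with maximal ideal $\mathfrak{n}$: any prime ideal, being proper, is contained in $\mathfrak{n}$, so $P\subseteq\mathfrak{n}$. Combining the two inclusions yields $P=\mathfrak{n}$.

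There is no genuine obstacle here; the argument is entirely formal once the radical characterization of primary ideals is recalled. The only points worth flagging are that the full equality $\varphi^{-1}(P)=\mathfrak{m}$ is stronger than what the proof actually consumes --- only $\mathfrak{m}\subseteq\varphi^{-1}(P)$ (equivalently $\varphi(\mathfrak{m})\subseteq P$) is used --- and that one must remember to take radicals in the correct direction, exploiting primeness of $P$ to collapse $\sqrt{P}$ back to $P$. The locality of $B$ is what makes $\mathfrak{n}$ the unique candidate above $\left<\varphi(\mathfrak{m})\right>$, so the Noetherian hypothesis, while natural in context, is not essential to this particular lemma.
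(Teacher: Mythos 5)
Your proof is correct and follows essentially the same route as the paper: extract the inclusion \(\left<\varphi(\mathfrak{m})\right>\subseteq P\) from the hypothesis \(\varphi^{-1}(P)=\mathfrak{m}\), then use the \(\mathfrak{n}\)-primary hypothesis to force \(P=\mathfrak{n}\). The only difference is that you spell out the final step (radicals plus maximality of \(\mathfrak{n}\)) which the paper leaves implicit, and your side remarks --- that only one inclusion of the contraction hypothesis is used and that the Noetherian assumption is inessential here --- are accurate.
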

\begin{proof}
Since \(\varphi^{-1}(P)=\mathfrak{m}\) we must have \(\left<\varphi\left(\varphi^{-1}(P)\right)\right>=\left<\varphi(\mathfrak{m})\right>\). On the other hand we know that \(\left<\varphi\left(\varphi^{-1}(P)\right)\right>\subset P\). Hence \(\left<\varphi(\mathfrak{m})\right>\subset P\). Since \(\left<\varphi(\mathfrak{m})\right>\) is assumed to be \(\mathfrak{n}\)-primary, we see that \(P=\mathfrak{n}\).
\end{proof}
\section{Main results}

\begin{theorem}\label{lift}
Let \((R,\mathfrak{m})\) be an equicharacteristic complete regular local ring and let \(A=R/\mathfrak{a}\) be a quotient ring with maximal ideal \(\mathfrak{m}_A\). Let \(\pi:R\longrightarrow A\) be the canonical surjection. Then any finite self map \(\varphi\) of \(A\) can be lifted to a finite self map \(\psi\) of \(R\) rendering a commuting diagram:

\[\begin{tikzpicture}
\matrix(m)[matrix of math nodes,
row sep=2.6em, column sep=2.8em,
text height=1.5ex, text depth=0.25ex]
{R&R\\
A&A\\};
\path[->,font=\scriptsize,>=angle 90]
(m-1-1) edge node[auto] {$\psi$} (m-1-2)
edge node[left] {$\pi$} (m-2-1)
(m-1-2) edge node[right] {$\pi$} (m-2-2)
(m-2-1) edge node[auto] {$\varphi$} (m-2-2);
\end{tikzpicture}\]
\end{theorem}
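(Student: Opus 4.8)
The plan is to construct \(\psi\) from a power‑series presentation of \(R\). By the Cohen structure theorem \(A\) has a coefficient field \(K_A\), and \(R\) is a power series ring over one of its coefficient fields, say \(R=K_R[[x_1,\dots,x_n]]\) with \(n=\dim R\). I first record that a finite self map \(\varphi\) is automatically local (a finite ring map contracts the maximal ideal to a maximal ideal), so that \(\langle\varphi(\mathfrak m_A)\rangle\) is \(\mathfrak m_A\)-primary and the induced residue extension \(\overline\varphi\colon k\to k\) (with \(k=A/\mathfrak m_A=R/\mathfrak m\)) makes \(k\) finite over \(\overline\varphi(k)\). Now a continuous ring homomorphism \(\psi\colon R\to R\) is determined by its restriction to \(K_R\) together with the images \(y_i:=\psi(x_i)\in\mathfrak m\); the commutativity \(\pi\psi=\varphi\pi\) will hold as soon as it holds on the topological generators \(K_R\) and the \(x_i\), i.e. as soon as \(\pi(y_i)=\varphi(\pi(x_i))\). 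Such a \(\psi\) is then local with \(\langle\psi(\mathfrak m)\rangle=\langle y_1,\dots,y_n\rangle\), and since \(R\) is complete and \(k\) is finite over \(\overline\psi(k)=\overline\varphi(k)\), the map \(\psi\) will be \emph{finite} precisely when \(\langle y_1,\dots,y_n\rangle\) is \(\mathfrak m\)-primary, i.e. when \(y_1,\dots,y_n\) is a system of parameters of \(R\) (an \(\mathfrak m\)-primary closed fibre forces module finiteness by complete Nakayama). Thus the whole problem reduces to choosing lifts \(y_i\) of the \(\varphi(\pi(x_i))\) that form a system of parameters.

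To make good lifts available I fix the coordinates using Lemma \ref{SOP}: let \(\overline x_1,\dots,\overline x_d\) (\(d=\dim A\)) be a strong system of parameters of \(A\). Being part of a minimal generating set of \(\mathfrak m_A\), these are linearly independent in \(\mathfrak m_A/\mathfrak m_A^2\), so any lifts \(x_1,\dots,x_d\in\mathfrak m\) are linearly independent in \(\mathfrak m/\mathfrak m^2\) and hence extend to a regular system of parameters \(x_1,\dots,x_n\) of \(R\). The key point is that, because \(\varphi\) is finite it preserves dimension: for every prime \(\mathfrak q\subseteq A\) one has \(\dim A/\mathfrak q=\dim A/\varphi^{-1}(\mathfrak q)\), since \(A/\varphi^{-1}(\mathfrak q)\hookrightarrow A/\mathfrak q\) is a finite (integral) extension of domains. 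Therefore \(\varphi(\overline x_1),\dots,\varphi(\overline x_d)\) is \emph{again} a system of parameters of \(A\): every minimal prime \(\overline{\mathfrak p}\) of \(\langle\varphi(\overline x_1),\dots,\varphi(\overline x_j)\rangle\) satisfies \(\varphi^{-1}(\overline{\mathfrak p})\supseteq\langle\overline x_1,\dots,\overline x_j\rangle\), whence \(\dim A/\overline{\mathfrak p}=\dim A/\varphi^{-1}(\overline{\mathfrak p})\le d-j\). In particular, writing \(z_i\in\mathfrak m\) for any lift of \(\varphi(\pi(x_i))\), the ideal \(\langle z_1,\dots,z_d\rangle+\mathfrak a\) is \(\mathfrak m\)-primary.

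Now I choose the lifts \(y_i\in z_i+\mathfrak a\) one at a time by the coset version of prime avoidance, keeping \(\dim R/\langle y_1,\dots,y_i\rangle=n-i\). At the \(i\)-th step one must avoid the minimal primes \(\mathfrak p\) of \(\langle y_1,\dots,y_{i-1}\rangle\) with \(\dim R/\mathfrak p=n-i+1\), and the only obstruction is a \(\mathfrak p\) containing the entire coset \(z_i+\mathfrak a\), i.e. with \(\mathfrak a\subseteq\mathfrak p\) and \(z_i\in\mathfrak p\). Such a \(\mathfrak p\) cannot occur. For \(i\le d\) it would produce a prime \(\overline{\mathfrak p}\) of \(A\) containing \(\langle\varphi(\overline x_1),\dots,\varphi(\overline x_i)\rangle\) with \(\dim A/\overline{\mathfrak p}=n-i+1>d-i\), contradicting the estimate above; for \(i>d\) it would contain \(\langle y_1,\dots,y_d\rangle+\mathfrak a=\langle z_1,\dots,z_d\rangle+\mathfrak a\), which is already \(\mathfrak m\)-primary, forcing \(\mathfrak p=\mathfrak m\) and contradicting \(\dim R/\mathfrak p>0\). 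Hence the coset is never trapped, the avoidance goes through, and \(y_1,\dots,y_n\) is a system of parameters of \(R\). This middle step is where I expect the real difficulty to lie: the lifts \(y_i\) are constrained to prescribed cosets, and it is exactly the interplay between the strong system of parameters from Lemma \ref{SOP} and the dimension‑preserving nature of the finite map \(\varphi\) that forces them into general position.

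Finally I assemble \(\psi\). Applying Lemma \ref{lifting fields} to the subfield \(K_A\subseteq A\) gives a coefficient field \(K_R\subseteq R\) with \(\pi|_{K_R}\colon K_R\xrightarrow{\sim}K_A\); applying it again to \(\varphi(K_A)\subseteq A\) gives a subfield \(L\subseteq R\) with \(\pi|_L\colon L\xrightarrow{\sim}\varphi(K_A)\), and I set \(\psi|_{K_R}:=(\pi|_L)^{-1}\circ\varphi\circ(\pi|_{K_R})\), a field homomorphism satisfying \(\pi\psi=\varphi\pi\) on \(K_R\). Since every \(y_i\in\mathfrak m\), the assignments \(x_i\mapsto y_i\) together with \(\psi|_{K_R}\) extend, by the universal property of \(K_R[[x_1,\dots,x_n]]\) mapping into the complete local ring \(R\), to a unique continuous local homomorphism \(\psi\colon R\to R\), and \(\pi\psi=\varphi\pi\) because both sides agree on the topological generators \(K_R\) and the \(x_i\). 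As \(\langle\psi(\mathfrak m)\rangle=\langle y_1,\dots,y_n\rangle\) is \(\mathfrak m\)-primary and the residue extension is finite, \(\psi\) is finite, completing the diagram. (Lemma \ref{nice} records the companion fact that, once \(\langle\psi(\mathfrak m)\rangle\) is \(\mathfrak m\)-primary, the closed point of \(\Spec R\) is the only point lying over the closed point under \(\psi\).)
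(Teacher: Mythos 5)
Your proof is correct, and it shares the paper's overall skeleton --- a strong system of parameters from Lemma \ref{SOP}, lifted and completed to a regular system of parameters of \(R\); coefficient fields moved by Lemma \ref{lifting fields}; the presentation \(R=K\llbracket X_1,\ldots,X_n\rrbracket\); Davis's coset prime avoidance; and Cohen's criterion (Remark \ref{finite}) for finiteness of \(\psi\) --- but the heart of the argument is genuinely different. The paper takes \emph{arbitrary} lifts \(f_1,\ldots,f_d\) of \(\varphi(\pi(X_1)),\ldots,\varphi(\pi(X_d))\) and proves \(\heit\left<f_1,\ldots,f_d\right>=d\) by combining Lemma \ref{nice} (which shows \(\left<\varphi(\pi(X_1)),\ldots,\varphi(\pi(X_d))\right>\) is \(\mathfrak{m}_A\)-primary) with Serre's Intersection Theorem in the regular ring \(R\); coset avoidance enters only at the later stages \(t>d\). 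You avoid Serre entirely: from the fact that a finite map preserves dimension (\(A/\varphi^{-1}(\mathfrak{q})\hookrightarrow A/\mathfrak{q}\) is an integral extension of domains) you extract the codimension bound \(\dim A/\overline{\mathfrak{q}}\le d-j\) for primes minimal over \(\left<\varphi(\overline{x}_1),\ldots,\varphi(\overline{x}_j)\right>\) at \emph{every} stage \(j\le d\), and this lets you run the coset avoidance uniformly from \(i=1\) to \(i=n\). Your route is more elementary and self-contained (Serre's theorem disappears, and Lemma \ref{nice} becomes unnecessary since your dimension argument subsumes it), at the price of constraining all \(n\) lifts by avoidance, whereas the paper gets the first \(d\) for free from a deeper theorem. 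Two points to tighten: at stage \(i\le d\) your obstruction prime \(\overline{\mathfrak{p}}\) contains, but need not be minimal over, \(\left<\varphi(\overline{x}_1),\ldots,\varphi(\overline{x}_i)\right>\), so you should pass to a minimal prime beneath it before invoking the estimate; and your avoidance steps rely on the full Davis--Kaplansky statement (a coset covered by finitely many primes lies inside a single one, which then contains both \(z_i\) and \(\mathfrak{a}\)) rather than the weaker contrapositive quoted in the paper --- this matters, e.g., when \(\mathfrak{a}=0\), where every prime contains \(\mathfrak{a}\) --- but \cite[Theorem~124]{Kapl} does give exactly the form you use, so the argument stands once that citation is made explicit.
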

\begin{remark}\label{finite}
Let \((A,\mathfrak{m}_A)\) be a complete Noetherian local ring and let \(\varphi\) be a self map of \(A\). Then by~\cite[Theorem~8, p.~68]{ISCohen}, to say that \(\varphi\) is finite is the same thing as to say that \(\left<\varphi(\mathfrak{m}_A)\right>\) is \(\mathfrak{m}_A\)-primary and \([A/\mathfrak{m}_A:\varphi(A)/\varphi(\mathfrak{m}_A)]\) is a finite (algebraic) field extension.
\end{remark}
\begin{proof}(of Theorem~\ref{lift}) 
 Let \(K\) be an arbitrary coefficient field of \(R\). Then \(\varphi\left(\pi(K)\right)\) is a subfield of \(A\), and can be lifted to a subfield \(L\) of \(R\), by Lemma~\ref{lifting fields}, in such a way that \(\pi|_L:L\longrightarrow\varphi\left(\pi(K)\right)\) is an isomorphism. We will use \(L\) at the end of our proof to construct the self map \(\psi\) of \(R\). Let \(d=\dim A\) and let \(e\) be the embedding dimension of \(A\). By Lemma~\ref{SOP} we can choose a strong system of parameters \(\{x_1,\ldots,x_d\}\) of \(A\) which is part of a minimal set of generators \(\{x_1,\ldots,x_d,\ldots,x_e\}\) of \(\mathfrak{m}_A\). Choose elements \(X_1,\ldots,X_e\) in \(\mathfrak{m}\) in such a way that \(\pi\left(X_i\right)=x_i\) for each \(i\). We claim that since the images of \(x_1,\ldots,x_e\) in \(\mathfrak{m}_A/\mathfrak{m}_A^2\) are linearly independent over \(A/\mathfrak{m}_A\), the images \(\overline{X}_1,\ldots,\overline{X}_e\) of \(X_1,\ldots,X_e\) in \(\mathfrak{m}/\mathfrak{m}^2\) are also linearly independent over \(R/\mathfrak{m}\). If not, there will be a dependence relation \(\alpha_1\overline{X}_1+\ldots+\alpha_e\overline{X}_e=0\) with \(\alpha_i\in R/\mathfrak{m}\) not all zero. This means if we choose elements \(a_i\in R-\mathfrak{m}\) such that they map to \(\alpha_i\) in \(R/\mathfrak{m}\) for \(1\leq i\leq e\), then \(a_1X_1+\ldots+a_eX_e\in\mathfrak{m}^2\). If we apply \(\pi\) to this relation, we obtain \(\pi(a_1)x_1+\ldots+\pi(a_e)x_e\in\mathfrak{m}_A^2\). Thus, the image of this element in \(\mathfrak{m}_A/\mathfrak{m}_A^2\) will give a nontrivial dependence relation \(\pi(a_1)\overline{x}_1+\ldots+\pi(a_e)\overline{x}_e=0\), which contradicts the linear independence of \(\overline{x}_1,\ldots,\overline{x}_e\). This proves the above claim. Hence, we can extend \(\{\overline{X}_1,\ldots,\overline{X}_e\}\) to a basis \(\{\overline{X}_1,\ldots,\overline{X}_e,\ldots,\overline{X_n}\}\) of \(\mathfrak{m}/\mathfrak{m}^2\) over \(R/\mathfrak{m}\), where \(n=\dim R\). By Nakayama Lemma this means if for \(e+1\leq i\leq n\) we choose elements \(X_i\in\mathfrak{m}\) such that they map to \(\overline{X}_i\) in \(R/\mathfrak{m}\), then \(\{X_1,\ldots,X_n\}\) is a minimal set of generators of \(\mathfrak{m}\). Furthermore, it follows from the Cohen Structure Theorem that 
\[
R=K\llbracket X_1,\ldots,X_n\rrbracket. 
\]
Now consider the elements \(\varphi\left(\pi(X_i)\right)\) in \(A\) and for \(1\leq i\leq d\) choose elements \(f_i\in\mathfrak{m}\) such that \(\pi(f_i)=\varphi\left(\pi(X_i)\right)\). We claim that the ideal \(\left<f_1,\ldots,f_d\right>\) of \(R\) has height \(d\). Note that by Krull's Theorem we have \(\heit \left<f_1,\ldots,f_d\right>\leq d\). To see the inequality in the other direction, we first note that the ideal \(\mathfrak{b}:=\left<\varphi\left(\pi(X_1)\right),\ldots,\varphi\left(\pi(X_d)\right)\right>\) is \(\mathfrak{m}_A\)-primary. This follows by applying Lemma~\ref{nice} to a minimal prime ideal \(P\) of \(\mathfrak{b}\) and recalling that \(\varphi\) is finite and \(\{x_1,\ldots,x_d\}\) is a system of parameters of \(A\). Therefore the ideal \(\pi^{-1}(\mathfrak{b})=\left<f_1,\ldots,f_d\right>+\mathfrak{a}\) is \(\mathfrak{m}\)-primary in \(R\) and by Serre's Intersection Theorem~\cite[p.~136, Chap.~V, Theorem~1]{Serre}
\[\dim R/\mathfrak{a}+\dim R/\left<f_1,\ldots,f_d\right>\leq\dim R,\]
or \(d+\dim R/\left<f_1,\ldots,f_d\right>\leq n\). Since \(R\) is regular, we have \(\dim R/\left<f_1,\ldots,f_d\right>=n-\heit\left<f_1,\ldots,f_d\right>\) and we obtain \(\heit \left<f_1,\ldots,f_d\right>\geq d\). This and the inequality in the other direction establish our claim that \(\heit\left<f_1,\ldots,f_d\right>=d\).\par
Now we will choose elements \(f_{d+1},\ldots, f_n\) in \(\mathfrak{m}\) inductively, making sure at each step that 
 \(\dim R/\left<f_1,\ldots,f_t\right>= n-t\), for \(d+1\leq t\leq n\).  So suppose \(d\leq t <n\) and that \(f_1,\ldots f_t\) have been chosen so that \(\dim R/\left<f_1,\ldots,f_t\right> = n-t\).  To choose \(f_{t+1}\) we will apply the \emph{coset version} of the Prime Avoidance Lemma due to E.~Davis (\cite[Theorem~124, p.~90]{Kapl} or~\cite[Exercise~16.8, p.~133]{Matsumura2}), which can be stated as follows: \emph{let \(I\) be an ideal of a commutative ring \(R\) and \(x\in R\) be an element. Let \(\mathfrak{p}_1,\ldots,\mathfrak{p}_s\) be prime ideals of \(R\) none of which contain \(I\). Then \[x+I\not\subset\displaystyle{\bigcup_{i=1}^s}\ \mathfrak{p}_i.\] }
 We choose an element \(u\in\mathfrak{m}\) such that \(\pi(u)=\varphi\left(\pi(X_{t+1})\right)\).  If 
 \(\dim R/\left<f_1,\dots f_t, u\right>=n-t-1\), then we set  \(f_{t+1}:= u\).  Otherwise, we let \(\{\mathfrak{p}_{i}\}_{1\leq i\leq s}\) be the set of all minimal associated prime ideals of \(R/\left<f_1,\ldots,f_{t}\right>\) that satisfy \(\dim R/\mathfrak{p}_i=\dim R/\left<f_1,\ldots,f_t\right>\).  None of these \(\mathfrak{p}_i\)'s contain \(\mathfrak{a}\), because \(\left<f_1,\ldots,f_t\right>+\mathfrak{a}\) is an \(\mathfrak{m}\)-primary ideal in \(R\). Therefore by the coset version of the Prime Avoidance Lemma there exists an element \(a\in\mathfrak{a}\) such that
\[u+a\not\in\bigcup_{i=1}^s\ \mathfrak{p}_i.\]
Setting \(f_{t+1} := u+ a\) we obtain \(\dim R/\left<f_1,\ldots,f_{t+1}\right> = n-t-1\) and \(\pi(f_{t+1})=\varphi\left(\pi(X_{t+1})\right)\). After choosing \(\{f_1,\ldots,f_n\}\) as described, we define a self map \(\psi\) of \(R=K\llbracket X_1,\ldots,X_n\rrbracket\) as follows. For each \(1\leq i\leq n\), we define \(\psi(X_i)\) to be \(f_i\) and for every element \(\alpha\) of \(K\) we define \(\psi(\alpha)\) to be \(\left(\pi|_L\right)^{-1}\left(\varphi\left(\pi(\alpha)\right)\right)\). Then \(\psi(\mathfrak{m})=\left<f_1,\cdots,f_n\right>\) is \(\mathfrak{m}\)-primary by construction of the \(f_i\)'s, and since \(\varphi\) is a finite self map of \(A\), we can see by Remark~\ref{finite}, that \(\psi\) is also a finite map.
\end{proof}
\begin{corollary}\label{shift}
Let \((A,\mathfrak{m}_A)\) be an equicharacteristic complete Noetherian local ring and let \(K\) be an arbitrary coefficient field of \(A\). Then there exists a surjective homomorphism \(\pi\) from some power series ring \(R\) over \(K\) onto \(A\), with the property that any finite self map \(\varphi\) of \(A\) can be lifted to a finite self map \(\psi\) of \(R\) such that \(\varphi\circ\pi=\pi\circ\psi\).
\end{corollary}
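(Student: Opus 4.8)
The plan is to deduce this from Theorem~\ref{lift} by first realizing \(A\) as a quotient of a suitable power series ring over \(K\) via the Cohen Structure Theorem, and then applying the lifting theorem to that presentation. First I would invoke the equicharacteristic version of the Cohen Structure Theorem to obtain a surjection \(\pi:R\longrightarrow A\), where \(R=K\llbracket X_1,\ldots,X_n\rrbracket\) is the formal power series ring over the given coefficient field \(K\) in \(n\) variables (one may take \(n\) equal to the embedding dimension of \(A\), sending the \(X_i\) to a minimal set of generators of \(\mathfrak{m}_A\)). The crucial structural feature is that this \(\pi\) can be arranged so that its restriction to \(K\subset R\) is an isomorphism onto the chosen coefficient field \(K\subset A\); equivalently, \(K\) is a coefficient field of \(R\) compatible with \(\pi\).

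Next I would simply observe that \(R=K\llbracket X_1,\ldots,X_n\rrbracket\) is an equicharacteristic complete \emph{regular} local ring, and that \(A=R/\mathfrak{a}\) for \(\mathfrak{a}=\ker\pi\). Thus the hypotheses of Theorem~\ref{lift} are met by this triple \((R,A,\pi)\). Since Theorem~\ref{lift} permits one to begin its proof with \emph{any} coefficient field of \(R\), I would choose that field to be the given \(K\). The point to emphasize is that the surjection \(\pi\) is fixed once and for all by the Cohen presentation and does not depend on any particular self map; this is precisely what is needed for the existential quantifier on \(\pi\) to precede the universal quantifier on \(\varphi\) in the statement.

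Then, given an arbitrary finite self map \(\varphi\) of \(A\), Theorem~\ref{lift} produces a finite self map \(\psi\) of \(R\) rendering the square commutative, that is, \(\pi\circ\psi=\varphi\circ\pi\), which is exactly the asserted relation \(\varphi\circ\pi=\pi\circ\psi\). As this holds for every finite self map \(\varphi\) with the \emph{same} \(\pi\), the corollary follows. I do not expect a genuine obstacle here: the only step requiring care is the verification that the coefficient field supplied by the Cohen Structure Theorem may be taken to be the prescribed \(K\), so that \(R\) is honestly a power series ring over \(K\) and the freedom in the choice of coefficient field in the proof of Theorem~\ref{lift} can be exercised in favor of \(K\). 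Since Cohen's theorem produces a presentation restricting to an isomorphism on coefficient fields, this is routine, and the corollary is essentially a specialization of Theorem~\ref{lift} to the case where the ambient regular ring is taken to be a power series ring.
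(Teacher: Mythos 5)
Your proposal is correct and follows essentially the same route as the paper: present \(A\) as a quotient of \(R=K\llbracket X_1,\ldots,X_e\rrbracket\) via the Cohen Structure Theorem (with \(\pi\) fixed independently of \(\varphi\)), then invoke Theorem~\ref{lift}. The only cosmetic difference is that the paper selects the minimal generators of \(\mathfrak{m}_A\) to contain a strong system of parameters via Lemma~\ref{SOP}, whereas you take an arbitrary minimal generating set; this choice is immaterial, since the proof of Theorem~\ref{lift} chooses its own strong system of parameters internally.
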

\begin{proof}
 Let \(d=\dim A\) and let \(e\) be the embedding dimension of \(A\). By Lemma~\ref{SOP} we can choose a strong system of parameters \(\{x_1,\ldots,x_d\}\) of \(A\) which is part of a minimal set of generators \(\{x_1,\ldots,x_d,\ldots,x_e\}\) of \(\mathfrak{m}_A\). Set \(R:=K\llbracket X_1,\ldots,X_e\rrbracket\), where \(X_1,\ldots,X_e\) are indeterminates. Then, by the Cohen Structure Theorem there is a surjective homomorphism \(\pi:R\longrightarrow A\), such that \(\pi\left(X_i\right)=x_i\) for each \(1\leq i\leq e\). Since \(R\) is a complete regular local ring and \(A\) is a homomorphic image of \(R\), the result follows from Theorem~\ref{lift}.
\end{proof}

\begin{remark}
Let \(X\) be a projective variety over a field \(K\) with a self map \(\varphi\), and let \(\mathcal{L}\) be an ample line bundle on \(X\) such that \(\varphi^*(\mathcal{L})\cong\mathcal{L}^{\otimes d}\) for some \(d\geq1\). Then some appropriate tensor power \(\mathcal{L}^{\otimes n}\) of \(\mathcal{L}\) is very ample and can be used to embed \(X\) in some projective space \(\mathbb{P}^N_K\), realizing \(X\) as \(\proj\left(K[X_1,\cdots,X_N]/\mathfrak{a}\right)\) for some graded ideal \(\mathfrak{a}\). Let 
\[\pi: K[X_1,\ldots,X_N]\longrightarrow K[X_1,\cdots,X_N]/\mathfrak{a}\] be the canonical surjection, and let \(\mathfrak{m}=\left<X_1,\cdots,X_N\right>\) and \(\mathfrak{m}_X=\left<\pi(X_1),\cdots,\pi(X_N)\right>\) be the corresponding irrelevant maximal ideals, respectively. Then \(\varphi\) will induce a graded \(K\)-self map of \(K[X_1,\cdots,X_N]/\mathfrak{a}\), which we will denote by \(\varphi\) again, with the property that \(\varphi(\mathfrak{m}_X)\) is \(\mathfrak{m}_X\)-primary. Then the proof of Theorem~\ref{lift} can be re-written in this setting, keeping careful track of grading, to lift \(\varphi\) to a graded \(K\)-self map \(\psi\) of \(K[X_1,\ldots,X_N]\) in such a way that \(\psi(\mathfrak{m})\) is \(\mathfrak{m}\)-primary. This shows that the assumption in~\cite[Corollary~2.2]{NF}, that \(K\) is infinite can be avoided.
\end{remark}

\bibliographystyle{amsplain}

\end{document}